\definecolor{webgreen}{rgb}{0,.5,0}
\definecolor{webbrown}{rgb}{.6,0,0}
\tikzset{circle node/.style = {circle,inner sep=1pt,draw, fill=white},
        X node/.style = {fill=white, inner sep=1pt},
        dot node/.style = {circle, draw, inner sep=5pt}
        }
\newtheorem{theorem}{Theorem}
\newtheorem{proposition}[theorem]{Proposition}
\newtheorem{corollary}[theorem]{Corollary}
\theoremstyle{definition}
\newtheorem{example}[theorem]{Example}
\newcommand{\seqnum}[1]{\href{http://oeis.org/#1}{\underline{#1}}}
\begin{document}

\begin{center}
\vskip 1cm{\LARGE\bf Constant coefficient Laurent biorthogonal polynomials, Riordan arrays and moment sequences} \vskip 1cm \large
Paul Barry\\
School of Science\\
Waterford Institute of Technology\\
Ireland\\
\href{mailto:pbarry@wit.ie}{\tt pbarry@wit.ie}
\end{center}
\vskip .2 in

\begin{abstract} We study properties of constant coefficient Laurent biorthogonal polynomials using Riordan arrays. We give details of related orthogonal polynomials, and we explore relationships between the moments of these orthogonal polynomials, the moments of the defining Laurent biorthogonal polynomials, and the expansions of $T$-fractions. Closed form expressions are given for the polynomials and their moments. \end{abstract}

\section{Introduction}

Let $b_{n+1}$ and $c_n$ for $n \in \mathbb{N}$ be arbitrary nonzero constants. The monic \emph{Laurent biorthogonal polynomials} (LBPs) $P_n(x)$ \cite{PB, Kam, Zhedanov}, defined by the sequences $b_n$ and $c_n$, is the sequence of polynomials determined by the recurrence
$$P_n(x)=(x-c_{n-1})P_{n-1}(x)-b_{n-1}xP_{n-2}(x), \quad \text{for}\quad n\ge 1,$$ with
$P_0(x)=1, P_1(x)=x-c_0$. The LBP $P_n(x)$ is a monic polynomial in $x$ of exact degree $n$ of which the constant term does not vanish. This note will be concerned with the constant coefficient case, that is, we assume that $c_n=c$ and $b_n=b$. 

We let $P_n(x)=\sum_{k=0}^n a_{n,k}x^k$, and we call the lower triangular matrix $(a_{n,k})_{0 \le n,k \le \infty}$ the coefficient array of the family of LBPs $\{P_n(x)\}$. In the case that the defining sequences are constant ($b_n=b$ and $c_n=c$) we have the following result \cite{PB}.
\begin{proposition} The coefficient array of the family of LBPs $\{P_n(x)\}$ defined by
$$P_n(x)=(x-c)P_{n-1}(x)-bxP_{n-2}(x), \quad \text{for}\quad n\ge 1,$$ with
$P_0(x)=1, P_1(x)=x-c$ is given by the Riordan array \cite{SGWW}
$$\left(\frac{1}{1+ct}, \frac{x(1-bt)}{1+ct}\right).$$
\end{proposition}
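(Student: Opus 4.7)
The plan is to pass to the bivariate generating function of the proposed Riordan array, simplify it to a compact rational form, and then read the three-term recurrence off the denominator.

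Writing the Riordan array as $(g(t),f(t))$ with $g(t)=\frac{1}{1+ct}$ and $f(t)=\frac{t(1-bt)}{1+ct}$, I would first invoke the defining property of a Riordan array, namely that the generating function of its $k$-th column is $g(t)f(t)^k$. Summing over $k$ against $x^k$ yields
$$\sum_{n,k\ge 0}a_{n,k}\,x^k t^n\;=\;\frac{g(t)}{1-xf(t)}\;=\;\frac{1}{(1+ct)-xt(1-bt)}\;=\;\frac{1}{1-(x-c)t+bxt^2},$$
after a one-line simplification of the denominator.

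I would then set $\tilde P_n(x)=\sum_{k\ge 0}a_{n,k}x^k$ for the $n$-th row polynomial of the array. From $\sum_{n\ge 0}\tilde P_n(x)t^n=\bigl(1-(x-c)t+bxt^2\bigr)^{-1}$, clearing the denominator and extracting $[t^n]$ for $n\ge 2$ gives $\tilde P_n(x)=(x-c)\tilde P_{n-1}(x)-bx\,\tilde P_{n-2}(x)$, while reading off the coefficients of $t^0$ and $t^1$ gives $\tilde P_0(x)=1$ and $\tilde P_1(x)=x-c$. These match the defining data for the $P_n$, so $\tilde P_n=P_n$ and the coefficient array is the claimed Riordan array. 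There is no genuine conceptual obstacle: the entire argument reduces to the algebraic simplification of $(1+ct)-xt(1-bt)$ together with the bookkeeping that separates $n=0,1$ from the generic recurrence. An entirely analogous route would bypass the bivariate generating function and verify the entrywise relation $a_{n,k}=a_{n-1,k-1}-c\,a_{n-1,k}-b\,a_{n-2,k-1}$ directly from $[t^n]\,g(t)f(t)^k$, but the bivariate approach is the shortest.
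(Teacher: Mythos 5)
Your proof is correct and complete. Note, however, that the paper does not actually prove this proposition itself: it is imported from an earlier paper of the author (the reference \cite{PB} attached to the statement), so there is no in-text argument to compare against line by line. What the paper does do, immediately afterwards, is state and prove Corollary~2, namely that $\sum_n P_n(x)t^n = \frac{1}{1+ct+xt(bt-1)}$, and that proof consists of exactly the bivariate generating function computation you perform: $\frac{g(t)}{1-xf(t)} = \frac{1}{(1+ct)-xt(1-bt)}$. The paper's logical order is Proposition (taken as known) $\Rightarrow$ generating function; yours is generating function $\Rightarrow$ recurrence $\Rightarrow$ Proposition. Your direction is the more self-contained one: by clearing the denominator $1-(x-c)t+bxt^2$ and matching coefficients of $t^0$, $t^1$, and $t^n$ for $n\ge 2$ against the defining recurrence and initial data, you get a genuine proof of the proposition rather than a citation, and you obtain Corollary~2 for free along the way. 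The only point worth making explicit is the uniqueness step you are implicitly using — that the recurrence together with $P_0$ and $P_1$ determines the sequence of polynomials, so $\tilde P_n = P_n$ — but that is immediate.
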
 The Riordan array $\left(\frac{1}{1+ct}, \frac{x(1-bt)}{1+ct}\right)$ has been called a generalized Delannoy matrix \cite{Yang}.
We recall that the Riordan array $(g(t), f(t))$, where 
$$g(t)=g_0 + g_1 t + g_2 t^2+ \ldots,$$ and 
$$f(t)=f_1 t + f_2 t^2+ f_3 t^2+ \ldots,$$ is the matrix with general $(n,k)$-th term given by 
$$a_{n,k}=[x^n]g(t)f(t)^k.$$ Here, $[x^n]$ is the functional which extracts the coefficient of $x^n$ \cite{MC}. 

From the above proposition, we have the following results.
\begin{corollary} We have the following generating function for the family $\{P_n(x)\}$ in the case of constant coefficients.
$$\sum_{n=0} P_n(x)t^n=\frac{1}{1+ct+xt(bt-1)}.$$
\end{corollary}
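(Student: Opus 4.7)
The plan is to read off the generating function directly from the Riordan array representation of the coefficient array given in the preceding proposition. Writing $P_n(x)=\sum_{k=0}^n a_{n,k}x^k$, the array $(a_{n,k})$ is a Riordan array with generators $g(t)=1/(1+ct)$ and $f(t)=t(1-bt)/(1+ct)$, and the crucial fact that $f$ has no constant term in $t$ ensures that the bivariate generating function we are about to write down is a well-defined formal power series.

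First I would interchange the order of summation:
$$\sum_{n\ge 0} P_n(x)\,t^n \;=\; \sum_{n\ge 0}\sum_{k=0}^{n} a_{n,k}\,x^k t^n \;=\; \sum_{k\ge 0} x^k \sum_{n\ge k} a_{n,k}\,t^n.$$
Next I would invoke the defining property of a Riordan array, namely that the generating function of the $k$-th column is $\sum_{n\ge k} a_{n,k}\,t^n = g(t)f(t)^k$. Substituting, the right-hand side collapses to a geometric series in $xf(t)$:
$$\sum_{k\ge 0} x^k g(t) f(t)^k \;=\; g(t)\sum_{k\ge 0}\bigl(xf(t)\bigr)^k \;=\; \frac{g(t)}{1-xf(t)}.$$

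Finally I would substitute the explicit expressions for $g$ and $f$ and simplify:
$$\frac{g(t)}{1-xf(t)} \;=\; \frac{1/(1+ct)}{1-xt(1-bt)/(1+ct)} \;=\; \frac{1}{(1+ct)-xt(1-bt)} \;=\; \frac{1}{1+ct+xt(bt-1)},$$
which is exactly the claimed expression. There is no real obstacle beyond a notational one: the proposition displays an ``$x$'' inside the second Riordan slot, and one needs to confirm that this ``$x$'' plays the role of the row-polynomial variable (equivalently, that in the standard Riordan convention the pair is $\bigl(1/(1+ct),\,t(1-bt)/(1+ct)\bigr)$ with the row-sum variable then being $x$). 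Once this identification is made, the proof reduces to the single line of algebra above.
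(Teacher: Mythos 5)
Your proposal is correct and follows the same route as the paper: the paper's proof simply quotes the bivariate generating function $g(t)/(1-xf(t))$ of the Riordan array and simplifies, which is exactly the computation you carry out (with the geometric-series derivation of that bivariate generating function made explicit). Your remark about the ``$x$'' appearing inside the second slot of the displayed Riordan pair is also well taken, since the paper itself switches between $x$ and $y$ there.
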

\begin{proof}
The result follows since the bivariate generating function of the Riordan array $\left(\frac{1}{1+ct}, \frac{y(1-bt)}{1+ct}\right)$ is given by
$$\frac{\frac{1}{1+ct}}{1-x\frac{t(1-bt)}{1+ct}}=\frac{1}{1+ct+xt(bt-1)}.$$
\end{proof}
\begin{corollary}
We have
$$P_n(x)=\sum_{k=0}^n \left(\sum_{j=0}^k \binom{k}{j}\binom{n-j}{n-k-j}(-b)^j(-c)^{n-k-j}\right)x^k.$$
\end{corollary}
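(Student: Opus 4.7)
The plan is to read the coefficient $a_{n,k}$ directly from the Riordan array representation given in the preceding Proposition, and then to expand the resulting rational generating function as a product of two elementary series. Since $P_n(x)=\sum_{k=0}^n a_{n,k}x^k$ with coefficient array $\left(\frac{1}{1+ct},\frac{t(1-bt)}{1+ct}\right)$, the Riordan definition $a_{n,k}=[t^n]g(t)f(t)^k$ immediately gives
$$a_{n,k}=[t^n]\frac{1}{1+ct}\left(\frac{t(1-bt)}{1+ct}\right)^k=[t^{n-k}]\frac{(1-bt)^k}{(1+ct)^{k+1}},$$
where in the last step the factor $t^k$ has been pulled out of the coefficient extraction.

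Next I would expand each factor separately. The numerator $(1-bt)^k$ expands by the ordinary binomial theorem as $\sum_{j}\binom{k}{j}(-b)^j t^j$. For the denominator I would use the negative binomial series
$$\frac{1}{(1+ct)^{k+1}}=\sum_{m\ge 0}\binom{k+m}{m}(-c)^m t^m.$$
Multiplying the two series and collecting the coefficient of $t^{n-k}$ via $j+m=n-k$ gives
$$a_{n,k}=\sum_{j}\binom{k}{j}(-b)^j\binom{k+(n-k-j)}{n-k-j}(-c)^{n-k-j}=\sum_{j=0}^{k}\binom{k}{j}\binom{n-j}{n-k-j}(-b)^j(-c)^{n-k-j},$$
where the upper limit of summation is $k$ because $\binom{k}{j}=0$ when $j>k$; terms with $j>n-k$ vanish automatically because then $n-k-j<0$ and $\binom{n-j}{n-k-j}=0$.

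There is no real obstacle here; the statement is essentially a bookkeeping consequence of the Proposition. The only point that needs a little care is the sign and index management when applying the negative binomial expansion and identifying $k+m=n-j$ so that $\binom{k+m}{m}=\binom{n-j}{n-k-j}$, which produces the exact double sum claimed in the corollary.
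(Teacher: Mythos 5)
Your proposal is correct and takes essentially the same approach as the paper, whose proof simply writes $a_{n,k}=[t^n]\frac{1}{1+ct}\left(\frac{t(1-bt)}{1+ct}\right)^k$ and invokes ``the method of coefficients''; you have just made the implicit steps explicit (pulling out $t^k$, expanding $(1-bt)^k$ and $(1+ct)^{-(k+1)}$, and setting $m=n-k-j$ so that $\binom{k+m}{m}=\binom{n-j}{n-k-j}$). The index and sign bookkeeping is accurate, so nothing further is needed.
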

\begin{proof} We have $P_n(x)=\sum_{k=0}^n a_{n,k}x^k$, where $a_{n,k}$ is given by
$$a_{n,k}=[t^n] \frac{1}{1+c t} \left(\frac{x(1-bt)}{1+ct}\right)^k.$$
Expanding this by the method of coefficients \cite{MC} leads to the desired expression.
\end{proof}

It is appropriate to call the inverse of the matrix $(a_{n,k})$ the \emph{moment matrix} of the LBPs $\{P_n(x)\}$, and to designate the elements $\mu_n$ of its first column as the \emph{moments}. We then have
\begin{proposition}
The moments $\mu_n$ of the LBPs $\{P_n(x)\}$ have generating function
\begin{align*}\mu(x)&=\frac{c+2b-c^2t-c \sqrt{1-2(2b+c)t+c^2t^2}}{2b}\\
&=1+\frac{ct}{1-ct}C\left(\frac{bt}{(1-ct)^2}\right),\end{align*}
where
$$C(t)=\frac{1-\sqrt{1-4t}}{2t}$$ is the generating function of the Catalan numbers \seqnum{A000108}.
\end{proposition}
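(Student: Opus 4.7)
The plan is to exploit Proposition 1 together with the standard formula for the inverse of a Riordan array. By Proposition 1, the coefficient array of $\{P_n(x)\}$ is the Riordan array $R=\left(g(t), f(t)\right) = \left(\frac{1}{1+ct}, \frac{t(1-bt)}{1+ct}\right)$. Since the moment matrix is defined as $R^{-1}$, and the first column of a Riordan array $(G,F)$ has generating function $G$, we have $\mu(t) = \frac{1}{g(\bar{f}(t))}$, where $\bar{f}$ is the compositional inverse of $f$. Because $g(t) = 1/(1+ct)$, this reduces immediately to $\mu(t) = 1 + c\,\bar{f}(t)$, and so the entire task comes down to computing $\bar{f}(t)$ explicitly.

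Next I would compute $\bar{f}$ by solving $f(u)=t$, i.e.\ $\frac{u(1-bu)}{1+cu}=t$, which rearranges to the quadratic
\[
b u^{2} - (1-ct)\,u + t = 0.
\]
The branch that vanishes at $t=0$ is
\[
\bar{f}(t) = \frac{(1-ct) - \sqrt{(1-ct)^{2}-4bt}}{2b} = \frac{(1-ct) - \sqrt{1 - 2(2b+c)t + c^{2}t^{2}}}{2b},
\]
after expanding $(1-ct)^2 - 4bt = 1 - 2(2b+c)t + c^2t^2$. Substituting into $\mu(t) = 1 + c\bar{f}(t)$ and collecting over the denominator $2b$ yields the first claimed expression
\[
\mu(t) = \frac{c + 2b - c^{2}t - c\sqrt{1 - 2(2b+c)t + c^{2}t^{2}}}{2b}.
\]

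For the second form, I would extract the factor $(1-ct)$ from inside the square root by writing $(1-ct)^{2} - 4bt = (1-ct)^{2}\bigl(1 - \tfrac{4bt}{(1-ct)^{2}}\bigr)$, so that
\[
\bar{f}(t) = \frac{1-ct}{2b}\left(1 - \sqrt{1 - \tfrac{4bt}{(1-ct)^{2}}}\right).
\]
Using the Catalan identity $1-\sqrt{1-4x} = 2x\,C(x)$ with $x = \frac{bt}{(1-ct)^{2}}$ collapses the right-hand side to $\bar{f}(t) = \frac{t}{1-ct}\,C\!\left(\frac{bt}{(1-ct)^{2}}\right)$, and multiplying by $c$ and adding $1$ gives the second claimed form.

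The only real obstacle is bookkeeping: one must choose the correct branch of the square root so that $\bar{f}(0)=0$, and one must carry the $(1-ct)$ factor cleanly in and out of the radical when converting to the Catalan form. Both steps are routine once the Riordan-inversion identity $\mu(t) = 1/g(\bar{f}(t))$ is in place.
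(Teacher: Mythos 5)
Your proposal is correct and follows the same route as the paper: the paper's proof simply asserts the inverse Riordan array $\left(\frac{1}{1+ct},\frac{t(1-bt)}{1+ct}\right)^{-1}$, whose first column entry is $\mu(t)$, while you supply the underlying computation (the inversion formula $(g,f)^{-1}=(1/g(\bar f),\bar f)$, the quadratic for $\bar f$, and the Catalan rewriting). The details check out, and your expression for $\bar f$ also confirms the second component of the inverse array stated in the paper.
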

\begin{proof} In effect, the inverse of the Riordan array $\left(\frac{1}{1+ct}, \frac{t(1-bt)}{1+ct}\right)$ is given by
$$\left(\frac{1}{1+ct}, \frac{t(1-bt)}{1+ct}\right)^{-1}=\left(1+\frac{ct}{1-ct}C\left(\frac{bt}{(1-ct)^2}\right), \frac{t}{1-ct}C\left(\frac{bt}{(1-ct)^2}\right)\right).$$ 
\end{proof}

\begin{corollary} We have
$$\mu_n= \sum_{k=0}^n \binom{2n-k-1}{2n-2k}C_{n-k} b^{n-k}c^k=0^n+c \sum_{k=0}^{n-1}\binom{n+k-1}{2k}C_k c^{n-k-1}b^k,$$ where
$C_n=\frac{1}{n+1}\binom{2n}{n}$ is the $n$-th Catalan number.
\end{corollary}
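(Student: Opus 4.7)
The plan is to extract the coefficient of $t^n$ directly from the second form of the generating function established in the preceding proposition, namely
$$\mu(t)=1+\frac{ct}{1-ct}\,C\!\left(\frac{bt}{(1-ct)^2}\right),$$
and then reconcile the two stated closed forms by a reindexing.

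First I would expand the Catalan generating function as the power series $C(u)=\sum_{k\ge 0}C_k u^k$ and substitute $u=bt/(1-ct)^2$, which turns the non-constant part of $\mu(t)$ into
$$c\sum_{k\ge 0}C_k\, b^k\, \frac{t^{k+1}}{(1-ct)^{2k+1}}.$$
Applying the standard identity $[t^m](1-ct)^{-r}=\binom{m+r-1}{m}c^m$ with $m=n-k-1$ and $r=2k+1$ then gives, for $n\ge 1$,
$$\mu_n=c\sum_{k=0}^{n-1}C_k\, b^k\,\binom{n+k-1}{n-k-1}c^{n-k-1}=c\sum_{k=0}^{n-1}\binom{n+k-1}{2k}C_k\, c^{n-k-1}b^k,$$
using $\binom{n+k-1}{n-k-1}=\binom{n+k-1}{2k}$. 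For $n=0$ the generating function gives $\mu_0=1$, which is recorded by the indicator $0^n$. This establishes the second closed form.

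To match the first closed form I would substitute $j=n-k$ in the sum
$$\sum_{k=0}^{n}\binom{2n-k-1}{2n-2k}C_{n-k}\, b^{n-k}c^{k},$$
obtaining $\sum_{j=0}^{n}\binom{n+j-1}{2j}C_j\, b^{j}c^{n-j}$. The $j=n$ term contributes $\binom{2n-1}{2n}C_n b^n=0$ whenever $n\ge 1$, and the $j=0$ term contributes $\binom{n-1}{0}c^{n}=c^{n}$, so the remaining range $0\le j\le n-1$ matches the second form exactly after factoring out one $c$. The $n=0$ case is checked separately, where the first sum reduces to $\binom{-1}{0}C_0=1$.

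The only genuinely non-routine step is the coefficient extraction $[t^{n-k-1}](1-ct)^{-(2k+1)}$, which is a direct application of the negative binomial expansion, so I expect no real obstacle beyond careful bookkeeping of indices and verification of the $n=0$ boundary case through the $0^n$ term.
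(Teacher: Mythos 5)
Your proof is correct: the coefficient extraction from $\mu(t)=1+\frac{ct}{1-ct}C\bigl(\frac{bt}{(1-ct)^2}\bigr)$ via the negative binomial expansion, the identification $\binom{n+k-1}{n-k-1}=\binom{n+k-1}{2k}$, and the reindexing $j=n-k$ (with the vanishing $j=n$ term) all check out, e.g.\ against $\mu_2=c(b+c)$. The paper states this corollary without proof as an immediate consequence of the preceding proposition, and what you have written is exactly the intended derivation made explicit.
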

The moments begin
$$1, c, c(b + c), c(b + c)(2b + c), c(b + c)(5b^2 + 5bc + c^2),\ldots.$$
\begin{corollary}
We have
$$\mu(t)=
\cfrac{1}
{1-\cfrac{ct}
{1-\cfrac{bt}
{1-\cfrac{(b+c)t}
{1-\cfrac{bt}
{1-\cfrac{(b+c)t}
{1-\cdots}}}}}}.$$
\end{corollary}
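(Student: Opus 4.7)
The plan is to exploit the periodicity of the continued fraction from the second level onward. Write $\mu(t) = 1/(1 - ct/(1 - G(t)))$, where $G(t)$ denotes the tail
\[
G(t) = \cfrac{bt}{1 - \cfrac{(b+c)t}{1 - \cfrac{bt}{1 - \cfrac{(b+c)t}{1-\cdots}}}}.
\]
Because the pattern $bt,(b+c)t$ repeats indefinitely, the tail reappearing after the first $(b+c)t$ is again $G$, so $G$ satisfies the fixed-point equation $G = bt/(1 - (b+c)t/(1-G))$. Clearing denominators produces the quadratic $G^2 - (1-ct)G + bt = 0$, and the branch with $G(0) = 0$ is $G(t) = \tfrac12\bigl((1-ct) - \Delta\bigr)$, where $\Delta = \sqrt{(1-ct)^2 - 4bt}$.

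The key observation is that the discriminant simplifies to
\[
(1-ct)^2 - 4bt = 1 - 2(2b+c)t + c^2 t^2,
\]
so $\Delta$ is exactly the square root appearing in the closed form of $\mu(t)$ from the preceding proposition. This both suggests that the continued fraction really does sum to $\mu(t)$ and pins down the correct sign of $\Delta$.

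It then remains to verify the identity. Substituting the closed form for $G$ into the outer level gives
\[
\frac{1}{1-\dfrac{ct}{1-G}} \;=\; \frac{1-G}{1-G-ct} \;=\; \frac{(1+ct)+\Delta}{(1-ct)+\Delta},
\]
and rationalizing by multiplying numerator and denominator by $(1-ct)-\Delta$ collapses the denominator to $(1-ct)^2 - \Delta^2 = 4bt$, while expansion of the numerator using $\Delta^2 = 1 - 2(2b+c)t + c^2 t^2$ and grouping terms yields $2t\bigl(2b+c - c^2 t - c\Delta\bigr)$. Dividing recovers precisely
\[
\mu(t) = \frac{2b + c - c^2 t - c\Delta}{2b},
\]
matching the formula from the previous proposition.

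The only real subtlety is the branch choice for the square root, which is forced by requiring $G(0) = 0$; after that the manipulation is a careful but routine rationalization. The main obstacle is therefore keeping track of signs and ensuring that the self-similarity of $G$ is set up with the correct starting level, since picking the wrong repeating block (say, starting the self-reference one layer too early) would yield a quadratic whose discriminant no longer matches $\Delta$.
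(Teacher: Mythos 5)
Your proof is correct and follows essentially the same route as the paper: isolate the periodic tail of the $T$-fraction, derive its fixed-point quadratic (your $G$ is related to the paper's $u$ by $u = 1/(1-G)$, giving the same discriminant $1-2(2b+c)t+c^2t^2$), pick the branch vanishing at $t=0$, and substitute back into the outer level. The paper's proof is only a sketch of this argument, whereas you carry out the rationalization explicitly and recover the closed form of $\mu(t)$ in full.
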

\begin{proof}
We solve for $u=u(t)$ where 
$$u=\frac{1}{1-\frac{bt}{1-(b+c)t}}.$$ 
Then we have $\mu(x)=\frac{1}{1-ctu}$. 
\end{proof}
\begin{corollary} We have
$$\mu(t)=
\cfrac{1}{1-ct-
\cfrac{bct^2}{1-(2b+c)t-
\cfrac{b(b+c)t^2}{1-(2b+c)t-
\cfrac{b(b+c)t^2}{1-(2b+c)t-\cdots}}}}.$$
\end{corollary}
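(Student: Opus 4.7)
The plan is to obtain the stated $J$-fraction as the even contraction of the $S$-fraction established in the immediately preceding corollary. Recall the classical contraction formula: if
$$F(t)=\cfrac{1}{1-\cfrac{a_1 t}{1-\cfrac{a_2 t}{1-\cfrac{a_3 t}{1-\cdots}}}},$$
then
$$F(t)=\cfrac{1}{1-a_1 t-\cfrac{a_1 a_2 t^2}{1-(a_2+a_3)t-\cfrac{a_3 a_4 t^2}{1-(a_4+a_5)t-\cfrac{a_5 a_6 t^2}{1-\cdots}}}}.$$
So the plan is simply to read off the $S$-fraction coefficients from the previous corollary and substitute.

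From the preceding corollary we have $a_1=c$, and for $k\ge 1$, $a_{2k}=b$ and $a_{2k+1}=b+c$. Substituting into the contraction formula gives the first numerator $a_1 a_2 t^2 = bc\,t^2$ and the first linear piece $a_1 t = ct$. For $n\ge 1$ the next linear term is $(a_{2n}+a_{2n+1})t = (b+(b+c))t = (2b+c)t$, and the next numerator is $a_{2n+1}a_{2n+2}t^2 = (b+c)b\,t^2$. These are precisely the coefficients appearing in the claimed $J$-fraction, which completes the proof once the contraction identity is invoked.

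The only step that requires any real work is justifying the contraction identity itself, but this is entirely standard (see, e.g., Wall's or Jones--Thron's treatises on continued fractions) and amounts to an elementary induction on the convergents: one verifies by equating the $2n$-th convergent of the $S$-fraction with the $n$-th convergent of the $J$-fraction through the three-term recurrences satisfied by the numerators and denominators. Since the preceding corollary already provides the $S$-fraction and the coefficient pattern is periodic from index $2$ onwards, no further computation beyond the substitution above is required.
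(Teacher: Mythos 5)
Your proof is correct: the even contraction of the $S$-fraction from the preceding corollary, with $a_1=c$, $a_{2k}=b$, $a_{2k+1}=b+c$, yields exactly the stated $J$-fraction, and your coefficient bookkeeping checks out. The paper in fact states this corollary without any proof, and the contraction argument you give is the standard (and evidently intended) derivation, consistent with the paper's subsequent appeal to Heilermann's formula using the $t^2$-coefficients $bc, b(b+c), b(b+c), \ldots$.
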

\begin{corollary} The Hankel transform $h_n=|\mu_{i+j}|_{0 \le i,j \le n}$ of the moments $\mu_n$ is given by
$$h_n= (bc)^n (b(b+c))^{\binom{n}{2}}.$$
\end{corollary}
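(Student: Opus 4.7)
The plan is to combine the previous corollary, which gives the Jacobi continued fraction (J-fraction) expansion of $\mu(t)$, with the classical Heilmann--Viennot--Flajolet formula relating Hankel determinants of a moment sequence to the coefficients appearing in its J-fraction. Recall that if
$$\mu(t)=\cfrac{1}{1-\alpha_0 t-\cfrac{\beta_1 t^2}{1-\alpha_1 t-\cfrac{\beta_2 t^2}{1-\alpha_2 t-\cdots}}},$$
then the Hankel determinant $h_n=|\mu_{i+j}|_{0\le i,j\le n}$ of size $(n+1)\times(n+1)$ is given by
$$h_n=\prod_{k=1}^{n}\beta_k^{\,n+1-k}.$$
This is a standard fact, whose cleanest derivation uses Viennot's combinatorial theory of Motzkin paths weighted by the $\alpha_k,\beta_k$, or equivalently the $LDL^T$ decomposition of the Hankel matrix induced by the three-term recurrence for the underlying orthogonal polynomials. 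The formula is independent of the diagonal weights $\alpha_k$.

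Reading off the J-fraction displayed in the preceding corollary, I identify
$$\alpha_0=c,\qquad \alpha_k=2b+c\ \ (k\ge 1),\qquad \beta_1=bc,\qquad \beta_k=b(b+c)\ \ (k\ge 2).$$
Substituting into the Heilmann--Viennot formula gives
$$h_n=\beta_1^{\,n}\prod_{k=2}^{n}\beta_k^{\,n+1-k}=(bc)^n\bigl(b(b+c)\bigr)^{\sum_{k=2}^{n}(n+1-k)}.$$

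The exponent simplifies via $\sum_{k=2}^{n}(n+1-k)=\sum_{j=1}^{n-1}j=\binom{n}{2}$, which yields
$$h_n=(bc)^n\bigl(b(b+c)\bigr)^{\binom{n}{2}},$$
as claimed. There is no real obstacle here: once the J-fraction form is in hand, the entire argument is a direct application of a classical determinant evaluation. The only thing worth checking carefully is the off-by-one in the Hankel size convention (the paper uses $(n+1)\times(n+1)$ matrices, so one gets $\beta_1^{n}\beta_2^{n-1}\cdots\beta_n^{1}$ rather than the $n\times n$ version $\beta_1^{n-1}\cdots\beta_{n-1}^{1}$), and the isolation of the exceptional first coefficient $\beta_1=bc$ from the stationary tail $\beta_k=b(b+c)$.
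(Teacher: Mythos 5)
Your proof is correct and follows exactly the paper's route: the paper likewise derives the result from the $J$-fraction in the preceding corollary via Heilermann's formula, merely stating that the $t^2$-coefficients are $bc, b(b+c), b(b+c), \ldots$ and leaving the exponent bookkeeping implicit. Your write-up just makes explicit the identification $\beta_1=bc$, $\beta_k=b(b+c)$ for $k\ge 2$ and the sum $\sum_{k=2}^{n}(n+1-k)=\binom{n}{2}$.
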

\begin{proof} This follows from Heilermann's formula \cite{Kratt} since the coefficients of $t^2$ are
$$bc, b(b+c), b(b+c), b(b+c),\ldots.$$
\end{proof}
\begin{corollary} The moments $\mu_n$ of the LBPs $\{P_n(x)\}$ defined by 
$$P_n(x)=(x-c)P_{n-1}-bxP_{n-2}(x)$$ with $P_0(x)=1, P_1(x)=x-c$, are also the moments for the family of orthogonal polynomials $Q_n(x)$ whose coefficient array is given by the Riordan array 
$$\left(\frac{(1+bt)^2}{1+(2b+c)t+b(b+c)t^2}, \frac{t}{1+(2b+c)t+b(b+c)t^2}\right).$$
\end{corollary}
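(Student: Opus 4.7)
The plan is to compute the bivariate row generating function of the given Riordan array, read off the three-term recurrence satisfied by the row polynomials $Q_n(x)$, and match the resulting recurrence coefficients against those appearing in the J-fraction expansion of $\mu(t)$ obtained above. Writing
$$g(t)=\frac{(1+bt)^2}{1+(2b+c)t+b(b+c)t^2}, \qquad f(t)=\frac{t}{1+(2b+c)t+b(b+c)t^2},$$
the bivariate generating function of the Riordan array $(g,f)$ is
$$\sum_{n\ge 0} Q_n(x)\, t^n \;=\; \frac{g(t)}{1-xf(t)} \;=\; \frac{(1+bt)^2}{1+(2b+c-x)t + b(b+c)t^2},$$
the common factor $1+(2b+c)t+b(b+c)t^2$ cancelling cleanly between $g(t)$ and $1-xf(t)$.

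Clearing this denominator and extracting the coefficient of $t^n$ yields the recurrence
$$Q_n(x) \;=\; (x - 2b - c)\, Q_{n-1}(x) - b(b+c)\, Q_{n-2}(x) \qquad (n \ge 3),$$
together with the initial values $Q_0 = 1$, $Q_1 = x - c$, and $Q_2 = x^2 - 2(b+c)x + c(b+c)$. These fit the standard monic orthogonal recurrence $Q_{n+1}(x) = (x - \alpha_n)\, Q_n(x) - \beta_n \,Q_{n-1}(x)$ with $\alpha_0 = c$, $\alpha_n = 2b+c$ for $n \ge 1$, $\beta_1 = bc$, and $\beta_n = b(b+c)$ for $n \ge 2$. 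By Favard's theorem $\{Q_n\}$ is thus a family of monic orthogonal polynomials.

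These $\alpha_n$ and $\beta_n$ are precisely the coefficients appearing in the J-fraction for $\mu(t)$ displayed in the preceding corollary. Since a monic orthogonal polynomial family and the generating function of its moments determine each other via these J-fraction coefficients (Heilermann's formula, \cite{Kratt}), the moments of $\{Q_n\}$ coincide with $(\mu_n)$, as required. The main obstacle is the careful bookkeeping for the two non-generic initial values $\alpha_0 = c \neq 2b+c$ and $\beta_1 = bc \neq b(b+c)$: the factor $(1+bt)^2$ in the numerator of $g(t)$ is precisely what is needed to absorb these initial irregularities into the Riordan form, and the proof essentially reduces to verifying that it does so.
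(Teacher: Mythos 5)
Your argument is correct, but it is not the route the paper takes. The paper's proof is a one-line direct computation: it inverts the Riordan array explicitly, finding
$$\left(\mu(t),\ \frac{1-(2b+c)t-\sqrt{1-2(2b+c)t+c^2t^2}}{2b(b+c)t}\right),$$
and reads off that the first column of the inverse (which is by definition the moment sequence of the row polynomials) has generating function $\mu(t)$. You instead expand $g(t)/(1-xf(t))$, extract the three-term recurrence with $\alpha_0=c$, $\beta_1=bc$, $\alpha_n=2b+c$, $\beta_n=b(b+c)$ otherwise, and match these against the J-fraction for $\mu(t)$ in the preceding corollary; your computations of $Q_1$ and $Q_2$ check out and agree with the values stated in the paper. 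Your route is more conceptual: it explains \emph{why} the numerator $(1+bt)^2$ must be there (to absorb the two irregular initial coefficients) and it simultaneously establishes the recurrence for $Q_n(x)$ that the paper only asserts after the corollary; the paper's route is shorter and requires no appeal to the J-fraction/orthogonal-polynomial dictionary. One correction: the correspondence you invoke in the last step --- that the recurrence coefficients of a monic OPS are exactly the coefficients of the J-fraction for its moment generating function --- is not ``Heilermann's formula'' (that name refers to the expression of the Hankel determinants as products of the $\beta_i$); the fact you actually need is the standard Jacobi/Stieltjes correspondence, treated in the Riordan-array setting in \cite{Classical} and classically in \cite{Jones}. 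With that attribution fixed, your proof is complete.
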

\begin{proof}
Calculation shows that the inverse of the above Riordan array is 
$$\left(\mu(x), \frac{1-(2b+c)t-\sqrt{1-2(2b+c)t+c^2t^2}}{2b(b+c)t}\right).$$
\end{proof}
The orthogonal polynomials $Q_n(x)$ satisfy
$$Q_n(x)=(x-(2b+c))Q_{n-1}(x)-b(b+c)Q_{n-2}(x),$$ with 
$$Q_0(x)=1, Q_1(x)=x-c, Q_2(x)=x^2-2x(b+c)+c(b+c).$$ 
For an exposition of the links between Riordan arrays and orthogonal polynomials, see \cite{Classical}.

We finish this section by noting that we can use Lagrange inversion \cite{LI} to determine the elements of the inverse coefficient matrix $(a_{n,k})^{-1}=\left(\frac{1}{1+ct}, \frac{t(1-bt}{1+ct}\right)^{-1}$. We obtain the following result.
\begin{proposition} The $(n,k)$-th element of the inverse of the coefficient matrix $(a_{n,k})$ is given by 
$$\frac{k}{n}\sum_{j=0}^n \binom{n}{j}\binom{2n-k-j-1}{n-k-j}c^jb^{n-k-j}+\frac{c(k+1)}{n}\sum_{j=0}^n \binom{n}{j}\binom{2n-k-j-2}{n-k-j-1}c^jb^{n-k-j-1}.$$ 
\end{proposition}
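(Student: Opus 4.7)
The plan is to apply Lagrange inversion to the coefficient Riordan array $(g(t),f(t))=\left(\frac{1}{1+ct},\frac{t(1-bt)}{1+ct}\right)$. Recall that the inverse of a Riordan array $(g,f)$ is $(1/g(\bar f),\bar f)$, where $\bar f$ is the compositional inverse of $f$; in particular the $(n,k)$-entry of $(a_{n,k})^{-1}$ equals $[t^n]\,\bigl(1+c\bar f(t)\bigr)\bar f(t)^k=[t^n]H(\bar f(t))$ with $H(u)=(1+cu)u^k$.

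First I would rewrite the defining equation $t=u(1-bu)/(1+cu)$ for $u=\bar f(t)$ in Lagrange-ready form as
\[
u = t\,\phi(u),\qquad \phi(u)=\frac{1+cu}{1-bu}.
\]
Then Lagrange inversion gives
\[
[t^n]H(\bar f(t))=\frac{1}{n}[u^{n-1}]\,H'(u)\,\phi(u)^n.
\]
Since $H(u)=(1+cu)u^k$, a short differentiation yields $H'(u)=k u^{k-1}+(k+1)c\,u^k$, so the target expression splits as
\[
\frac{k}{n}[u^{n-k}]\!\left(\frac{1+cu}{1-bu}\right)^{\!n}+\frac{c(k+1)}{n}[u^{n-k-1}]\!\left(\frac{1+cu}{1-bu}\right)^{\!n}.
\]

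Next I would extract each coefficient by forming the Cauchy product of the two binomial expansions
\[
(1+cu)^n=\sum_{j\ge 0}\binom{n}{j}c^j u^j,\qquad (1-bu)^{-n}=\sum_{i\ge 0}\binom{n+i-1}{i}b^i u^i,
\]
which gives
\[
[u^m]\!\left(\frac{1+cu}{1-bu}\right)^{\!n}=\sum_{j=0}^{m}\binom{n}{j}\binom{n+m-j-1}{m-j}c^j b^{m-j}.
\]
Substituting $m=n-k$ and $m=n-k-1$ respectively yields the two sums in the statement. Extending the upper index of summation from $n-k$ (resp.\ $n-k-1$) up to $n$ is cosmetic, since the binomial coefficients $\binom{2n-k-j-1}{n-k-j}$ and $\binom{2n-k-j-2}{n-k-j-1}$ vanish once the lower index becomes negative.

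There is no serious obstacle here; the only thing to watch is the bookkeeping of index shifts between $[u^{n-1}]H'(u)\phi^n$, the two contributions coming from the two pieces of $H'$, and the upper limits of summation after the Cauchy-product extraction. Once these are aligned, the formula falls out directly from Lagrange inversion and the two binomial expansions.
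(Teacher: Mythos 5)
Your proposal is correct and follows exactly the route the paper indicates: the paper merely states that Lagrange inversion applied to $\left(\frac{1}{1+ct},\frac{t(1-bt)}{1+ct}\right)^{-1}$ yields the formula, and you have supplied the details (the form $u=t\,\frac{1+cu}{1-bu}$, the split of $H'(u)=ku^{k-1}+c(k+1)u^k$, and the Cauchy-product extraction) correctly. The only cosmetic caveat is that the formula, with its factor $1/n$, implicitly assumes $n\ge 1$, the $n=0$ row being trivial.
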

Setting $k=0$ in this expression (which gives us the first column), we obtain that $\mu_0=1$ and 
$$\mu_n=\frac{c}{n}\sum_{j=0}^n \binom{n}{j}\binom{2n-j-2}{n-j-1}c^jb^{n-j-1}$$ for $n \ge 1$. 

\section{T-fractions and Toeplitz determinants}
We begin this section by considering the $T$-fraction  \cite{Jones}
$$\tilde{\mu}(t)=\cfrac{1}{1-ct-
\cfrac{bt}{1-ct-
\cfrac{bt}{1-ct-\cdots}}}.$$ 
Solving the equation 
$$u = \frac{1}{1-ct-btu}$$ for $u=u(t)$, we see that 
$$\tilde{\mu}(t)=u(t)=\frac{1-ct-\sqrt{1-2(2b+c)t+c^2t^2}}{2bt}.$$ 
This expands to give the sequence $\tilde{\mu}_n$  that begins 
$$1, b + c, (b + c)(2b + c), (b + c)(5b^2 + 5bc + c^2),\ldots.$$
We have 
$$\mu(t)=1+t \tilde{\mu}(t).$$ 
\begin{proposition} The generating function $\tilde{\mu}(t)$ is the generating function of the moments of the family of orthogonal polynomials $\tilde{Q}_n(x)$ whose coefficient array is given by the Riordan array 
$$\left(\frac{1+bt}{1+(2b+c)t+b(b+c)t^2},\frac{t}{1+(2b+c)t+b(b+c)t^2}\right).$$ 
\end{proposition}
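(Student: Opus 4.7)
My plan is to exploit the standard correspondence between Riordan arrays of the form $\left(\frac{g(t)}{1+\alpha t+\beta t^2},\frac{t}{1+\alpha t+\beta t^2}\right)$ and families of orthogonal polynomials, together with a direct computation of the compositional inverse of the $f$-entry of the Riordan array. Let me denote the array by $L=(g(t),f(t))$, where
\[
g(t)=\frac{1+bt}{1+(2b+c)t+b(b+c)t^{2}},\qquad f(t)=\frac{t}{1+(2b+c)t+b(b+c)t^{2}}.
\]
The strategy splits into two parts: (i) establish orthogonality of the row polynomials $\tilde Q_n(x)$, and (ii) identify the first column of $L^{-1}$ with $\tilde\mu(t)$.

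For (i), because the denominator $1+(2b+c)t+b(b+c)t^{2}$ is a fixed quadratic and the numerator $1+bt$ is linear, the array $L$ falls under the general framework relating Riordan arrays to orthogonal polynomials described in \cite{Classical}. From that framework one reads off a three-term recurrence for the row polynomials $\tilde Q_n(x)$ with constant recurrence coefficients $\alpha_n$ and $\beta_n$ for $n\geq 1$ (modifying only $\alpha_0$), determined by the coefficients $2b+c$ and $b(b+c)$ of the denominator and the numerator data $1+bt$. This yields orthogonality relative to a linear functional whose moments I next identify.

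For (ii), I would compute $\bar f(t)$, the compositional inverse of $f(t)$, by solving the defining quadratic
\[
b(b+c)\,t\,\bar f^{2}+\bigl((2b+c)t-1\bigr)\bar f+t=0,
\]
obtaining
\[
\bar f(t)=\frac{1-(2b+c)t-\sqrt{1-2(2b+c)t+c^{2}t^{2}}}{2b(b+c)t},
\]
after observing the pleasant cancellation $(1-(2b+c)t)^{2}-4b(b+c)t^{2}=1-2(2b+c)t+c^{2}t^{2}$. By the Riordan inversion rule, $L^{-1}=\bigl(1/g(\bar f),\bar f\bigr)$, and since $1+(2b+c)\bar f+b(b+c)\bar f^{2}=\bar f/t$, the factor $g(\bar f)$ collapses to
\[
g(\bar f)=\frac{(1+b\bar f)\,t}{\bar f},\qquad\text{so}\qquad \frac{1}{g(\bar f)}=\frac{\bar f}{t(1+b\bar f)}.
\]
Substituting the explicit $\bar f(t)$ and rationalising the factor $1/(1+ct-\sqrt{\,\cdot\,})$ in the denominator against $1+ct+\sqrt{\,\cdot\,}$ reduces $1/g(\bar f)$ to
\[
\frac{1-ct-\sqrt{1-2(2b+c)t+c^{2}t^{2}}}{2bt}=\tilde\mu(t).
\]

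The main obstacle is the rationalisation step in (ii): it is a routine but somewhat delicate manipulation because two square-root expressions need to be simplified simultaneously, and one must identify $(1+ct)^{2}-(1-2(2b+c)t+c^{2}t^{2})=4(b+c)t$ to obtain the clean closed form. Once this identification is made, the rest is bookkeeping, and combining (i) with (ii) yields the proposition.
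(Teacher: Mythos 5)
Your proposal is correct and follows essentially the same route as the paper: the paper's proof is a one-line appeal to the fact that the moments are generated by the first column of the inverse Riordan array $\bigl(1/g(\bar f),\bar f\bigr)$, which is exactly the computation you carry out explicitly (and your algebra checks out, including the discriminant simplification to $1-2(2b+c)t+c^2t^2$ and the identity $(1+ct)^2-(1-2(2b+c)t+c^2t^2)=4(b+c)t$). You simply supply the details that the paper leaves to ``the theory of Riordan arrays.''
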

\begin{proof} By the theory of Riordan arrays, the generating function of the first column of the inverse matrix is equal to $\tilde{\mu}(t)$. 
\end{proof}
The orthogonal polynomials $\tilde{Q}_n(x)$ satisfy the recurrence
$$\tilde{Q}_n(x)=(x-(2b+c))\tilde{Q}_{n-1}(x)-b(b+c)\tilde{Q}_{n-2}(x),$$ with 
$$\tilde{Q}_0(x)=1, \tilde{Q}_1(x)=x-(b+c).$$
\begin{proposition} 
We have 
$$\tilde{\mu}_n=\sum_{k=0}^n \binom{n+k}{2k}c^{n-k}b^k C_k.$$ 
\end{proposition}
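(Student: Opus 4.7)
The plan is to extract $[t^n]\tilde{\mu}(t)$ directly from the closed form
$$\tilde{\mu}(t)=\frac{1-ct-\sqrt{1-2(2b+c)t+c^2t^2}}{2bt}$$
by rewriting it in terms of the Catalan generating function $C(z)=(1-\sqrt{1-4z})/(2z)$ and then reading off coefficients via a standard binomial expansion.

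First, I would perform the algebraic key step of observing that
$$1-2(2b+c)t+c^2t^2 = (1-ct)^2 - 4bt,$$
which lets me pull a factor of $1-ct$ out of the square root:
$$\sqrt{1-2(2b+c)t+c^2t^2}=(1-ct)\sqrt{1-\tfrac{4bt}{(1-ct)^2}}.$$
Substituting back and simplifying using $1-\sqrt{1-4z}=2zC(z)$ with $z=bt/(1-ct)^2$ collapses the expression to
$$\tilde{\mu}(t)=\frac{1}{1-ct}\,C\!\left(\frac{bt}{(1-ct)^2}\right).$$

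Next, I would expand $C$ as a power series in $z$, pull out the factors of $b^k$ and $t^k$, and combine with the $1/(1-ct)$ in front to obtain
$$\tilde{\mu}(t)=\sum_{k=0}^{\infty}C_k\,b^k\,\frac{t^k}{(1-ct)^{2k+1}}.$$
Using the classical identity $[t^m](1-ct)^{-(2k+1)}=\binom{m+2k}{2k}c^m$, I would extract $[t^n]$ from each summand (taking $m=n-k$) and re-index, yielding
$$\tilde{\mu}_n=\sum_{k=0}^{n}\binom{n+k}{2k}c^{n-k}b^k C_k,$$
as required.

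The only non-routine moment is the square-root identity $(1-ct)^2-4bt=1-2(2b+c)t+c^2t^2$; once that is spotted, the rest is a direct application of the generating-function identity for Catalan numbers and the negative-binomial expansion of $(1-ct)^{-(2k+1)}$, so I anticipate no serious obstacles.
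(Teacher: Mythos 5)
Your proof is correct, and every algebraic step checks out: the factorization $1-2(2b+c)t+c^2t^2=(1-ct)^2-4bt$, the reduction to $\tilde{\mu}(t)=\frac{1}{1-ct}C\bigl(\frac{bt}{(1-ct)^2}\bigr)$, and the coefficient extraction $[t^{n-k}](1-ct)^{-(2k+1)}=\binom{n+k}{2k}c^{n-k}$ are all valid, and your starting point (the surd form of $\tilde{\mu}(t)$) is legitimately established earlier in the section by solving the $T$-fraction equation. The route differs from the paper's in packaging rather than in substance. The paper observes that the matrix with entries $\binom{n+k}{2k}c^{n-k}$ is the Riordan array $\left(\frac{1}{1-ct},\frac{t}{(1-ct)^2}\right)$ and applies it to the column vector $(b^kC_k)$, identifying the image by the standard fact that this Riordan array transforms the Stieltjes fraction of $C(bt)$ into the $T$-fraction of $\tilde{\mu}(t)$; that continued-fraction step is asserted rather than computed. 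Your version makes the same underlying identity completely explicit: your negative-binomial expansion of $(1-ct)^{-(2k+1)}$ is precisely the computation of the $(n,k)$ entry of that Riordan array, and your surd manipulation is a direct verification of the continued-fraction identity the paper takes as known. What your approach buys is self-containedness --- no appeal to the theory of Riordan array actions on continued fractions is needed; what the paper's approach buys is a conceptual explanation (the moment sequence $\tilde{\mu}_n$ is literally the image of the Catalan moment sequence $b^kC_k$ under a named Riordan array), which is the theme the rest of the paper develops.
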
. 
\begin{proof} The matrix with general $(n,k)$-th term $\binom{n+k}{2k}c^{n-k}$ is the Riordan array 
$\left(\frac{1}{1-ct}, \frac{t}{(1-ct)^2}\right)$. Applying this Riordan array to the generating function 
$$\cfrac{1}{1-
\cfrac{bt}{1-
\cfrac{bt}{1-\cdots}}}$$ of the sequence $b^n C_n$, we obtain the generating function 
$$\tilde{\mu}(t)=\cfrac{1}{1-ct-
\cfrac{bt}{1-ct-
\cfrac{bt}{1-ct-\cdots}}}$$ of $\tilde{\mu}_n$. 
\end{proof}
\begin{corollary} We have
$$\mu_n=0^n+\sum_{k=0}^{n-1} \binom{n+k-1}{2k}c^{n-k-1}b^k C_k.$$
\end{corollary}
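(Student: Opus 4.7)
The plan is to read off the corollary directly from the relation $\mu(t) = 1 + t\tilde{\mu}(t)$ noted just before the previous proposition, together with the closed form for $\tilde{\mu}_n$ just established.

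First I would unpack $\mu(t) = 1 + t\tilde{\mu}(t)$ coefficient-wise. Writing $\mu(t) = \sum_{n \ge 0} \mu_n t^n$ and $\tilde{\mu}(t) = \sum_{n \ge 0} \tilde{\mu}_n t^n$, this identity says $\mu_0 = 1$ and $\mu_n = \tilde{\mu}_{n-1}$ for $n \ge 1$. Using the Iverson-like notation $0^n$, this collapses into the single statement $\mu_n = 0^n + \tilde{\mu}_{n-1}$ (with the convention $\tilde{\mu}_{-1} = 0$ if needed, or just understanding the sum as empty when $n=0$).

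Next I would invoke the preceding proposition, which gives
\[
\tilde{\mu}_m = \sum_{k=0}^{m} \binom{m+k}{2k} c^{m-k} b^k C_k.
\]
Substituting $m = n-1$ yields
\[
\tilde{\mu}_{n-1} = \sum_{k=0}^{n-1} \binom{n+k-1}{2k} c^{n-k-1} b^k C_k,
\]
which matches the sum appearing in the corollary.

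Combining these two steps produces exactly
\[
\mu_n = 0^n + \sum_{k=0}^{n-1} \binom{n+k-1}{2k} c^{n-k-1} b^k C_k,
\]
as required. There is no real obstacle here: the work was done in establishing $\mu(t) = 1 + t\tilde{\mu}(t)$ and in the closed form for $\tilde{\mu}_n$, and the corollary is simply a re-indexing of that formula.
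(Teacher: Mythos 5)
Your proof is correct and is exactly the argument the paper intends: the corollary is stated without proof immediately after the identity $\mu(t)=1+t\tilde{\mu}(t)$ and the closed form for $\tilde{\mu}_n$, and your coefficient-wise reindexing $\mu_n=0^n+\tilde{\mu}_{n-1}$ is the evident deduction from those two facts.
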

The theory of LBPs is tied to that of $T$-fractions and Toeplitz determinants. In order to define the relevant Toeplitz matrices and determinants in our case, we must extend the moments $(\mu_n)_{n \ge 0}$ to a bi-infinite sequence $(\mu_n)_{-\infty \le n \le \infty}$. We do this as follows. We extend $\mu_n$ to $n < 0$ by setting 
$$\mu_n = \frac{\mu_{1-n}}{c^{1-2n}},\quad\text{for}\quad n<0.$$
We let 
$$t_n=|\mu_{-j+k}|_{j,k=0\cdots n}$$ and 
$$t'_n=|\mu_{1-j+k}|_{j,k=0 \cdots n}.$$ 
The theory of LBPs now gives us the following results.
\begin{proposition}
We have 
$$b=-\frac{t_{n-1}t'_{n+1}}{t_n t'_n}.$$
$$c=\frac{t_n t'_{n+1}}{t_{n+1} t'_n}.$$ 
The Toeplitz transform $t_n$ of the moments $\mu_n$ of the LBPs $\{P_n(x)\}$ is given by
$$t_n = \left(-\frac{b}{c}\right)^{\binom{n+1}{2}}.$$ 
The polynomials $P_n(x)$ are given by 
$$P_n(x)=\frac{1}{t_n} \left|
\begin{array}{ccccc}
 \mu_0 & \mu_1 & \cdots & \mu_{n-1} & \mu_n \\
 \mu_{-1} & \mu_0 & \cdots & \mu_{n-2} & \mu_{n-1} \\
 \vdots & \vdots & \ddots & \vdots & \vdots \\
 \mu_{-n+1} & \mu_{-n+2} & \cdots & \mu_0 & \mu_1 \\
 1 & x & \cdots & x^{n-1} & x^n \\
\end{array}
\right|.$$
\end{proposition}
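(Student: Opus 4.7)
The plan is to apply the classical Toeplitz-determinant theory of Laurent biorthogonal polynomials (see \cite{Jones, Zhedanov, Kam}) to the present constant coefficient situation and then extract the closed form for $t_n$.

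For a general LBP family, Cramer's rule applied to the linear system expressing biorthogonality against the Laurent monomials $x^{-j}$, together with the monic normalization, yields the classical formulas
\[ b_n = -\frac{t_{n-1}t'_{n+1}}{t_n t'_n}, \qquad c_n = \frac{t_n t'_{n+1}}{t_{n+1}t'_n}. \]
Substituting $b_n = b$ and $c_n = c$ gives the first two identities directly. Combining them produces $-b/c = t_{n-1}t_{n+1}/t_n^2$, a second-order recurrence that forces $\log t_n$ to be quadratic in $n$. The two boundary values $t_0 = \mu_0 = 1$ and
\[ t_1 = \mu_0^2 - \mu_1\mu_{-1} = 1 - c\cdot\frac{b+c}{c^2} = -\frac{b}{c} \]
(using the extension rule $\mu_{-1} = \mu_2/c^3$ together with $\mu_2 = c(b+c)$ from the listed moments) then pin down the solution uniquely as $t_n = (-b/c)^{\binom{n+1}{2}}$.

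The determinantal expression for $P_n(x)$ is the Heine-type representation for LBPs. Writing $P_n(x) = \sum_{k=0}^n p_{n,k}x^k$, the polynomial is characterized among monic polynomials of degree $n$ by the biorthogonality conditions $\sum_{k=0}^n p_{n,k}\mu_{k-j} = 0$ for $j=1,\ldots,n$, coupled with $p_{n,n}=1$. These $n+1$ equations form a linear system whose solution by Cramer's rule is exactly the displayed $(n+1)\times(n+1)$ determinant, the prefactor $1/t_n$ being what is required to normalize the coefficient of $x^n$ to one: expansion of the determinant along its last row produces $\sum_k p_{n,k}x^k$ with the coefficient of $x^n$ equal to $t_n$, while expansion along any other row reproduces the biorthogonality relations via cofactor identities.

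The main technical point to watch along the way is that the bi-infinite extension $\mu_n = \mu_{1-n}/c^{1-2n}$ for $n<0$ is the consistent extension making the biorthogonality relations, and hence both $t_n$ and $t'_n$, behave as in the general LBP framework. I would verify this directly by checking that the linear functional $\mathcal{L}$ on Laurent polynomials defined by $\mathcal{L}(x^n) = \mu_n$ on this extended sequence indeed satisfies $\mathcal{L}(x^{-j}P_n(x)) = 0$ for $j=1,\ldots,n$; the computation is short, since $\mu(t)$ as given in closed form in the introduction is a quadratic surd whose expansion at $0$ and at $\infty$ match up exactly under the stated rule, which is the step I expect to be slightly delicate but ultimately routine.
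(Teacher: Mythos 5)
The paper offers no proof of this proposition at all --- it simply writes ``The theory of LBPs now gives us the following results'' --- so your write-up is necessarily more detailed than the source. Your overall strategy (quote the general determinantal formulas for $b_n$, $c_n$ from LBP theory, specialize to constants, turn the ratio $-b/c=t_{n-1}t_{n+1}/t_n^2$ into a second-order multiplicative recurrence, and fix the solution with $t_0=1$, $t_1=-b/c$) is the right way to make the paper's appeal to ``the theory'' precise, and your computation $t_1=\mu_0^2-\mu_1\mu_{-1}=1-(b+c)/c=-b/c$ using the extension rule is correct.

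There is, however, a concrete error in your treatment of the determinantal representation. With the paper's definition $t_n=|\mu_{-j+k}|_{j,k=0\cdots n}$ (an $(n+1)\times(n+1)$ determinant), the cofactor of $x^n$ in the displayed $(n+1)\times(n+1)$ determinant is the determinant of the top-left $n\times n$ block $(\mu_{-j+k})_{j,k=0,\dots,n-1}$, which is $t_{n-1}$, not $t_n$ as you assert. Consequently the prefactor needed to make the polynomial monic is $1/t_{n-1}$; the formula as you (and the paper) state it fails already at $n=1$, where
$$\frac{1}{t_1}\left|\begin{array}{cc}\mu_0&\mu_1\\ 1&x\end{array}\right|=\frac{x-c}{-b/c}=-\frac{c}{b}(x-c)\neq x-c=P_1(x),$$
whereas dividing by $t_0=1$ gives the correct answer. (Equivalently, the statement is consistent only if one reads $t_n$ in the prefactor as the $n\times n$ Toeplitz determinant, the convention used in some of the cited sources; under the paper's explicit definition it is off by one.) You should either flag this discrepancy or carry out the $n=1,2$ sanity check you implicitly promise in your last paragraph --- it would have caught the problem. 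The rest of that final paragraph, verifying that the extension $\mu_n=\mu_{1-n}/c^{1-2n}$ is the one compatible with the biorthogonality functional, is indeed the step that needs doing and is correctly identified as the delicate point, but it is left as a plan rather than executed.
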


\section{Examples}

\begin{example} In this example, we consider the case where $b=1$. By solving the equations 
$$u = \frac{1}{1+c t +t u}$$ and 
$$v=\frac{1}{1+\frac{(c+1)t}{1+tv}}$$ and comparing $u$ and $v$, we see that the two continued fractions
$$u(t)=\cfrac{1}{1+ct+
\cfrac{t}{1+ct+
\cfrac{t}{1+ct+\ldots}}}$$ and 
$$v(t)=\cfrac{1}{1+
\cfrac{(c+1)t}{1+
\cfrac{t}{1+
\cfrac{(c+1)t}{1+
\cfrac{t}{1+\cdots}}}}}$$ are equal. 
Their common expansion $\tilde{\mu}_n$ begins 
$$1, c + 1, (c + 1)(c + 2), (c + 1)(c^2 + 5c + 5), (c + 1)(c^3 + 9c^2 + 21c + 14), \ldots.$$ 
As a sequence of polynomials in $c$, the corresponding coefficient array begins
$$\left(
\begin{array}{ccccccc}
 1 & 0 & 0 & 0 & 0 & 0 & 0 \\
 1 & 1 & 0 & 0 & 0 & 0 & 0 \\
 2 & 3 & 1 & 0 & 0 & 0 & 0 \\
 5 & 10 & 6 & 1 & 0 & 0 & 0 \\
 14 & 35 & 30 & 10 & 1 & 0 & 0 \\
 42 & 126 & 140 & 70 & 15 & 1 & 0 \\
 132 & 462 & 630 & 420 & 140 & 21 & 1 \\
\end{array}
\right).$$ 
This is triangle \seqnum{A060693}, with general term $\binom{2n-k}{k}C_{n-k}$, which counts the number of Schroeder paths from $(0,0)$ to $(2n,0)$ with $k$ peaks. We have in this case
$$\tilde{\mu}_n = \sum_{k=0}^n \binom{2n-k}{k}C_{n-k}c^k.$$ 
The above theory tells us that the sequence $\tilde{\mu}_n$ is the moment sequence for the orthogonal polynomials with coefficient array given by the Riordan array 
$$\left(\frac{1+t}{1+(c+2)t+(c+1)t^2}, \frac{t}{1+(c+2)t+(c+1)t^2}\right).$$ 
The corresponding LBP moment sequence $\mu_n$, which begins 
$$1, c, c(c + 1), c(c + 1)(c + 2), c(c + 1)(c^2 + 5c + 5), c(c + 1)(c^3 + 9c^2 + 21c + 14), \ldots,$$ is given by the first column of the inverse of the Riordan array 
$$\left(\frac{1}{1+ct}, \frac{t(1-t)}{1+ct}\right).$$ It is also given by the first column of the inverse of 
$$\left(\frac{(1+t)^2}{1+(c+2)t+(c+1)t^2}, \frac{t}{1+(c+2)t+(c+1)t^2}\right).$$ 
The link to Schroeder numbers is evident by taking $c=1$. In that case, 
we have 
$$\tilde{\mu}_n = \sum_{k=0}^n \binom{2n-k}{k}C_{n-k}=\sum_{k=0}^n \binom{n+k}{2k}C_k=S_n,$$ the $n$-th large Schroeder number \seqnum{A006318}.
In this case the numbers $\tilde{\mu}_n$ begin 
$$1, 2, 6, 22, 90, 394, 1806, 8558, 41586,\ldots,$$ while the LBP sequence $\mu_n$ begins 
$$1,1, 2, 6, 22, 90, 394, 1806, 8558, 41586,\ldots.$$
By varying the parameter $c$, we find interpretations in terms of Schroeder paths where the level steps can have $c$ colors. 
\end{example}

\begin{example} Given a Riordan array $A$, the matrix $P_A=A^{-1}\bar{A}$ is called its production matrix, where $\bar{A}$ denotes the matrix $A$ with its top row removed. For the LBP array $\left(\frac{1}{1+ct}, \frac{t(1-bt)}{1+ct}\right)$, its production matrix begins 
$$\left(
\begin{array}{cccccc}
 c & 1 & 0 & 0 & 0 & 0 \\
 b c & b+c & 1 & 0 & 0 & 0 \\
 b^2 c & b (b+c) & b+c & 1 & 0 & 0 \\
 b^3 c & b^2 (b+c) & b (b+c) & b+c & 1 & 0 \\
 b^4 c & b^3 (b+c) & b^2 (b+c) & b (b+c) & b+c & 1 \\
 b^5 c & b^4 (b+c) & b^3 (b+c) & b^2 (b+c) & b (b+c) & b+c \\
\end{array}
\right).$$ 
This displays a structural property of Riordan arrays: after the first column, all columns have the same elements, apart from the descending zeros.

We now take an example with non-constant $b_n$. Thus we let $b_n$ be the sequence that begins 
$$1,2,1,2,1,2,1,2,1,\ldots,$$ while we take $c_n=c=1$. 
Then the moment matrix corresponding to the LBPs given by 
$$P_n(x)=(x-1)P_{n-1}(x)- b_{n-1}xP_{n-2}(x),$$ with $P_0(x)=1, P_1(x)=x-1$, begins 
$$\left(
\begin{array}{cccccccc}
 1 & 0 & 0 & 0 & 0 & 0 & 0 & 0 \\
 1 & 1 & 0 & 0 & 0 & 0 & 0 & 0 \\
 3 & 4 & 1 & 0 & 0 & 0 & 0 & 0 \\
 13 & 18 & 6 & 1 & 0 & 0 & 0 & 0 \\
 65 & 91 & 34 & 9 & 1 & 0 & 0 & 0 \\
 355 & 500 & 199 & 64 & 11 & 1 & 0 & 0 \\
 2061 & 2914 & 1206 & 430 & 90 & 14 & 1 & 0 \\
 12501 & 17721 & 7526 & 2856 & 670 & 135 & 16 & 1 \\
\end{array}
\right).$$ 
This array has a production matrix which begins
$$\left(
\begin{array}{ccccccc}
 1 & 1 & 0 & 0 & 0 & 0 & 0 \\
 2 & 3 & 1 & 0 & 0 & 0 & 0 \\
 2 & 3 & 2 & 1 & 0 & 0 & 0 \\
 4 & 6 & 4 & 3 & 1 & 0 & 0 \\
 4 & 6 & 4 & 3 & 2 & 1 & 0 \\
 8 & 12 & 8 & 6 & 4 & 3 & 1 \\
 8 & 12 & 8 & 6 & 4 & 3 & 2 \\
\end{array}
\right).$$ 
This illustrates that the moment array is not a Riordan array; nevertheless, the production matrix does reflect the periodicity in the defining $b_n$ parameters.

The sequence \seqnum{A155867}, which begins 
$$1,3,13,65,355,2061,\ldots$$ is given by 
$$\sum_{k=0}^n \binom{n+k}{2k} S_k.$$
\end{example}

\begin{example} In this example, we consider the moments in the case where $b=c-1$. Thus we look at the first column of the matrix $\left(\frac{1}{1+ct}, \frac{t(1-(c-1)t)}{1+ct}\right)^{-1}$. 
The moments begin 
$$1, c, c(2c - 1), c(2c - 1)(3c - 2), c(2c - 1)(11c^2 - 15c + 5), c(2c - 1)(45c^3 - 93c^2 + 63c - 14),\ldots.$$
This sequence of polynomials in $c$ has a coefficient array that begins 
$$\left(
\begin{array}{cccccc}
 1 & 0 & 0 & 0 & 0 & 0 \\
 0 & 1 & 0 & 0 & 0 & 0 \\
 0 & -1 & 2 & 0 & 0 & 0 \\
 0 & 2 & -7 & 6 & 0 & 0 \\
 0 & -5 & 25 & -41 & 22 & 0 \\
 0 & 14 & -91 & 219 & -231 & 90 \\
\end{array}
\right).$$ 
The second column is composed of the alternating sign Catalan numbers, while the diagonal is given by the augmented large Schroeder numbers. The general $(n,k)$-th element of this matrix is given by 
$$(-1)^{n-k}\sum_{j=0}^n \binom{n+j-1}{2j}\binom{j}{n-k}C_j,$$ so that the moments in this case are give by
$$\mu_n=\sum_{k=0}^n (-1)^{n-k}\sum_{j=0}^n \binom{n+j-1}{2j}\binom{j}{n-k}C_j c^k.$$
The row sums of the matrix above are all equal to $1$. In this case, the Riordan array becomes $\left(\frac{1}{1+t}, \frac{t}{1+t}\right)^{-1}=\left(\frac{1}{1-t}, \frac{t}{1-t}\right)$, which is the binomial matrix with an all $1$s first column.

The unsigned matrix 
$$\left(
\begin{array}{cccccc}
 1 & 0 & 0 & 0 & 0 & 0 \\
 0 & 1 & 0 & 0 & 0 & 0 \\
 0 & 1 & 2 & 0 & 0 & 0 \\
 0 & 2 & 7 & 6 & 0 & 0 \\
 0 & 5 & 25 & 41 & 22 & 0 \\
 0 & 14 & 91 & 219 & 231 & 90 \\
\end{array}
\right)$$ 
corresponds to the case $b=c+1$. Its row sums begin 
$$1, 1, 3, 15, 93, 645,\ldots.$$
The sequence $0,1,3,15,93,\ldots$ is the expansion of the reversion of $\frac{x(1-2x)}{1+x}$, \seqnum{A103210}.
\end{example}
\begin{example} When $b=c$, we are dealing with the matrix $\left(\frac{1}{1+cx}, \frac{x(1-cx)}{1+cx}\right)$, which begins 
$$\left(
\begin{array}{cccccc}
 1 & 0 & 0 & 0 & 0 & 0 \\
 -c & 1 & 0 & 0 & 0 & 0 \\
 c^2 & -3 c & 1 & 0 & 0 & 0 \\
 -c^3 & 5 c^2 & -5 c & 1 & 0 & 0 \\
 c^4 & -7 c^3 & 13 c^2 & -7 c & 1 & 0 \\
 -c^5 & 9 c^4 & -25 c^3 & 25 c^2 & -9 c & 1 \\
\end{array}
\right).$$ 
The coefficients are those of the signed Delannoy triangle \seqnum{A008288}. In this case, the inverse or moment matrix begins 
$$\left(
\begin{array}{cccccc}
 1 & 0 & 0 & 0 & 0 & 0 \\
 c & 1 & 0 & 0 & 0 & 0 \\
 2 c^2 & 3 c & 1 & 0 & 0 & 0 \\
 6 c^3 & 10 c^2 & 5 c & 1 & 0 & 0 \\
 22 c^4 & 38 c^3 & 22 c^2 & 7 c & 1 & 0 \\
 90 c^5 & 158 c^4 & 98 c^3 & 38 c^2 & 9 c & 1 \\
\end{array}
\right).$$
In this case, the moments are the scaled large Schroeder numbers $\mu_n=c^n S_n$. 
\end{example}
\section{Relations between the Riordan arrays}
We let 
$$L=\left(\frac{1}{1+c t}, \frac{t(1-bt)}{1+ct}\right)$$ be the coefficient matrix of the LBPs $\{P_n(x)\}$. We recall that we have 
$$P_n(x)=(x-c)P_{n-1}(x)-bxP_{n-1}(x),$$ with $P_0(x)=1, P_1(x)=x-c$. 
The LBP moments $\mu_n$ are then the elements of the first column of $L^{-1}$. 
We let 
$$O=\left(\frac{(1+bt)^2}{1+(2b+c)t+b(b+c)t^2}, \frac{t}{1+(2b+c)t+b(b+c)t^2}\right).$$ 
This is the coefficient array of the family of orthogonal polynomials $Q_n(x)$ which satisfies the recurrence 
$$Q_n(x)=(x-(2b+c))Q_{n-1}(x)-b(b+c)Q_{n-2}(x),$$ with 
$Q_0(x)=1, Q_1(x)=x-c, Q_2(x)=x^2-2x(b+c)+c(b+c)$. 
The LBP moments $\mu_n$ are given by the elements of the first column of $O^{-1}$.

We also let 
$$\tilde{O}=\left(\frac{1+bt}{1+(2b+c)t+b(b+c)t^2}, \frac{t}{1+(2b+c)t+b(b+c)t^2}\right).$$ This is the coefficient matrix of the family of orthogonal polynomials $\tilde{Q}_n(x)$ that satisfy the recurrence 
$$\tilde{Q}_n(x)=(x-(2b+c))\tilde{Q}_{n-1}(x)-b(b+c)\tilde{Q}_{n-2}(x),$$ with
$\tilde{Q}_0(x)=1, \tilde{Q}_1(x)=x-(b+c)$. 
The moments $\tilde{\mu}_n$ are given by the elements of the first column of $\tilde{O}^{-1}$. 
Finally, we let 
$$B(b)=\left(\frac{1}{1-bt}, \frac{t}{1-bt}\right)$$ be the generalized binomial matrix with general $(n,k)$-th element $\binom{n}{k}b^{n-k}$. We have $B(b)^{-1}=B(-b)$. 
\begin{proposition}
We have 
$$L=\left(1, \frac{t}{1-bt}\right)\cdot O.$$
\end{proposition}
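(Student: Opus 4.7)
The plan is to apply the Riordan multiplication rule
$$(g_1(t), f_1(t)) \cdot (g_2(t), f_2(t)) = \bigl(g_1(t)\, g_2(f_1(t)),\; f_2(f_1(t))\bigr)$$
with left factor $(1, t/(1-bt))$ and right factor $O$. Since $g_1 \equiv 1$, the whole product reduces to substituting $s := t/(1-bt)$ into the two generating functions that define $O$, and then checking that the results match the two components of $L$.

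First I would handle the common denominator $D(s) := 1+(2b+c)s+b(b+c)s^2$. Under $s \mapsto t/(1-bt)$, multiply through by $(1-bt)^2$ and collect powers of $t$. The constant and linear terms obviously give $1 + ct$, and the key observation is that the $t^2$ coefficient $b^2 - (2b+c)b + b(b+c)$ collapses to zero. Hence $D(s)$ evaluates to $(1+ct)/(1-bt)^2$. This single cancellation is what makes everything work.

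Next I would exploit the elementary identity $1 + bs = 1/(1-bt)$, so that $(1+bs)^2 = 1/(1-bt)^2$. Dividing by $D(s)$ then yields
$$\frac{(1+bs)^2}{D(s)} = \frac{1/(1-bt)^2}{(1+ct)/(1-bt)^2} = \frac{1}{1+ct},$$
matching the first component of $L$. The same division for the second component gives
$$\frac{s}{D(s)} = \frac{t/(1-bt)}{(1+ct)/(1-bt)^2} = \frac{t(1-bt)}{1+ct},$$
which is the second component of $L$. Together these establish the identity.

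The main obstacle is just the bookkeeping in the denominator computation, and in particular noticing that the quadratic coefficient vanishes identically in $b, c$; once that is in hand, the rest of the proof is a one-line substitution. No analytic input beyond the Riordan product formula is required.
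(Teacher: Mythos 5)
Your proof is correct and follows essentially the same route as the paper: both apply the Riordan product rule with $g_1\equiv 1$ and verify the two substitutions $g_2(t/(1-bt))=1/(1+ct)$ and $f_2(t/(1-bt))=t(1-bt)/(1+ct)$. You simply supply the intermediate algebra (the vanishing of the quadratic coefficient and the identity $1+bs=1/(1-bt)$) that the paper leaves implicit.
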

\begin{proof}
We have $O=(g(t), f(t))$ where $g(t)=\frac{(1+t)^2}{1+(2b+c)t+b(b+c)t^2}$. 
The first element of the product above is then given by 
$$1.g\left(\frac{t}{1-bt}\right)=\frac{1}{1+ct}.$$ 
Similarly the second element is given by 
$$f\left(\frac{t}{1-bt}\right)=\frac{t(1-bt)}{1+ct}.$$
\end{proof}
\begin{corollary}
We have 
$$P(n,x)=\sum_{k=0}^n \binom{n-1}{n-k}b^{n-k} Q_k(x).$$
\end{corollary}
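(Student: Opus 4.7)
The plan is to deduce the corollary directly from the factorization $L = \bigl(1, \frac{t}{1-bt}\bigr)\cdot O$ established in the previous proposition, by reading off the entries of the ``change-of-basis'' Riordan array $\bigl(1, \frac{t}{1-bt}\bigr)$ and interpreting the matrix product on the level of the polynomials $P_n(x)$ and $Q_n(x)$.

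First I would record the general principle that if $L$ and $O$ are the coefficient arrays of polynomial families $\{P_n(x)\}$ and $\{Q_n(x)\}$, and if $L = M \cdot O$ for some lower-triangular matrix $M = (m_{n,k})$, then
\[
P_n(x)=\sum_j L_{n,j}x^j = \sum_j \sum_k m_{n,k}O_{k,j}x^j = \sum_k m_{n,k}Q_k(x).
\]
So the coefficients in the expansion of $P_n(x)$ in the basis $\{Q_k(x)\}$ are exactly the entries of $M$.

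Next I would compute these entries for $M = \bigl(1, \frac{t}{1-bt}\bigr)$. By the definition of a Riordan array,
\[
m_{n,k}=[t^n]\left(\frac{t}{1-bt}\right)^k = [t^{n-k}]\frac{1}{(1-bt)^k} = \binom{n-1}{k-1}b^{n-k}
\]
for $n\ge k\ge 1$, together with $m_{0,0}=1$ and $m_{n,0}=0$ for $n\ge 1$. Substituting these into the formula from the previous step, and using the symmetry $\binom{n-1}{k-1}=\binom{n-1}{n-k}$, yields
\[
P_n(x)=\sum_{k=0}^n \binom{n-1}{n-k}b^{n-k}Q_k(x),
\]
which is the claimed identity.

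This proof is essentially routine; there is no real obstacle, only the bookkeeping of conventions at the boundary. I would note the edge case $n=0$ separately: both sides equal $1$ (reading $\binom{-1}{0}=1$ or $\binom{-1}{-1}=1$ in the usual combinatorial convention) and for $n\ge 1$ the $k=0$ term drops out because $\binom{n-1}{n}=0$, consistently with $P_n(0)\neq 0$ being carried by the $k\ge 1$ terms, and with both $P_n$ and $Q_n$ being monic of degree $n$ (the leading term comes from the $k=n$ summand, whose coefficient $\binom{n-1}{0}b^0=1$ matches the monicity of $P_n$).
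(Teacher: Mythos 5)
Your proof is correct and follows the same route as the paper: the paper's proof consists of the single observation that the general element of $\left(1, \frac{t}{1-bt}\right)$ is $\binom{n-1}{n-k}b^{n-k}$, and you have simply supplied the (correct) supporting details — the translation of the matrix factorization $L=M\cdot O$ into an expansion of $P_n(x)$ in the basis $\{Q_k(x)\}$, the coefficient extraction $[t^{n-k}](1-bt)^{-k}=\binom{n-1}{n-k}b^{n-k}$, and the boundary cases. No issues.
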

\begin{proof}
The general element of the Riordan array $\left(1, \frac{t}{1-bt}\right)$ is $\binom{n-1}{n-k}b^{n-k}$. 
\end{proof}
This provides a Riordan array interpretation of the links between the constant coefficient Laurent biorthogonal polynomials and the orthogonal polynomials defined by the coefficient matrix $O$.
\begin{proposition}
We have 
$$L=B(b)\cdot \tilde{O}.$$
\end{proposition}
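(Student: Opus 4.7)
My plan is to verify the identity by straight computation with the Riordan product formula $(g_1,f_1)\cdot(g_2,f_2)=\bigl(g_1\cdot g_2(f_1),\,f_2(f_1)\bigr)$. Applied to $B(b)\cdot\tilde{O}$ with $g_1(t)=1/(1-bt)$, $f_1(t)=t/(1-bt)$, and $\tilde{g},\tilde{f}$ the generators of $\tilde{O}$, the task reduces to evaluating $\tilde{g}(f_1(t))$ and $\tilde{f}(f_1(t))$ and checking that the answers are $1/(1+ct)$ and $t(1-bt)/(1+ct)$ respectively.

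The key simplifying observation — and the only thing that makes the computation short — is the factorization
\[
1+(2b+c)t+b(b+c)t^2=(1+bt)\bigl(1+(b+c)t\bigr),
\]
which I would verify by direct expansion. This immediately cancels the numerator of $\tilde{g}$, giving $\tilde{g}(t)=1/(1+(b+c)t)$, and rewrites $\tilde{f}(t)=t/\bigl((1+bt)(1+(b+c)t)\bigr)$.

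With these simpler forms in hand, the substitution $t\mapsto t/(1-bt)$ is routine: using $1+b\cdot t/(1-bt)=1/(1-bt)$ and $1+(b+c)\cdot t/(1-bt)=(1+ct)/(1-bt)$, I would obtain
\[
\tilde{g}\!\left(\tfrac{t}{1-bt}\right)=\frac{1-bt}{1+ct},\qquad \tilde{f}\!\left(\tfrac{t}{1-bt}\right)=\frac{t(1-bt)}{1+ct}.
\]
Multiplying the first by $g_1(t)=1/(1-bt)$ gives $1/(1+ct)$, and the second matches the second generator of $L$ exactly; hence $B(b)\cdot\tilde{O}=L$.

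There is no real obstacle here: once the quadratic factorization is spotted, everything else is algebraic bookkeeping. The mildly delicate point is simply keeping the order of composition correct in the Riordan product (substituting $f_1$ of the \emph{left} factor into the generators of the \emph{right} factor, not vice versa), since getting this backwards would yield an expression that does not simplify.
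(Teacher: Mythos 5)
Your computation is correct and is essentially the paper's approach: the paper omits a proof for this particular proposition, but its proof of the analogous identity $L=\left(1,\frac{t}{1-bt}\right)\cdot O$ is exactly this direct substitution via the Riordan product rule, and your factorization $1+(2b+c)t+b(b+c)t^2=(1+bt)(1+(b+c)t)$ together with the two substitution identities checks out.
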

\begin{corollary} We have 
$$P_n(x)=\sum_{k=0}^n \binom{n}{k}b^{n-k} \tilde{Q}_k(x).$$
\end{corollary}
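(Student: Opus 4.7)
The plan is to extract this corollary directly from the matrix identity $L = B(b)\cdot \tilde{O}$ stated in the preceding proposition. Since $L$ is the coefficient array of the $P_n(x)$ (row $n$ of $L$ lists the coefficients of $P_n(x)$) and $\tilde{O}$ is the coefficient array of the $\tilde{Q}_k(x)$ (row $k$ of $\tilde{O}$ lists the coefficients of $\tilde{Q}_k(x)$), the identity $L = B(b)\cdot \tilde{O}$ immediately translates, row by row, into a statement expressing each $P_n(x)$ as a linear combination of the polynomials $\tilde{Q}_0(x),\ldots,\tilde{Q}_n(x)$, with weights drawn from the $n$-th row of $B(b)$.

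So first I would remind the reader that the general $(n,k)$-entry of the Riordan array $B(b) = \left(\frac{1}{1-bt}, \frac{t}{1-bt}\right)$ is
\[
[t^n]\, \frac{1}{1-bt}\left(\frac{t}{1-bt}\right)^k \;=\; [t^{n-k}]\,\frac{1}{(1-bt)^{k+1}} \;=\; \binom{n}{k}b^{n-k},
\]
a one-line coefficient extraction. This identifies $B(b)$ as the generalized Pascal/binomial matrix already named in the paper.

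Next I would invoke the standard principle that multiplying a coefficient array on the left by a lower triangular matrix $M$ replaces each polynomial (row) by the corresponding $M$-linear combination of the original polynomials. Applied to $L = B(b)\cdot \tilde{O}$ with $M = B(b)$, reading off row $n$ yields exactly
\[
P_n(x) \;=\; \sum_{k=0}^{n} (B(b))_{n,k}\, \tilde{Q}_k(x) \;=\; \sum_{k=0}^{n}\binom{n}{k}b^{n-k}\,\tilde{Q}_k(x),
\]
which is the claim.

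There is no real obstacle here: the proof is mechanical once the preceding proposition is in hand. The only point requiring mild care is keeping the conventions straight — that rows of $L$ and $\tilde{O}$ index polynomials while columns index the monomial basis, so that row-level matrix multiplication on the left indeed corresponds to polynomial-level linear combination. Beyond that, the corollary is just a transcription of the matrix factorization into the language of polynomial expansions.
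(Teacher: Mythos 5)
Your proposal is correct and matches the paper's (implicit) argument: the paper states no proof for this corollary, but its proof of the analogous corollary for $Q_n(x)$ is exactly your one-liner, namely reading off the general $(n,k)$-entry of the left factor in the Riordan factorization and translating the matrix product row by row into a linear combination of polynomials. Your explicit coefficient extraction confirming that the $(n,k)$-entry of $B(b)$ is $\binom{n}{k}b^{n-k}$ is consistent with what the paper already asserts when it introduces $B(b)$.
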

We can thus say that the LBP polynomials are the $b$-th binomial transform of the orthogonal polynomials $\tilde{Q}_n(x)$. The relationship between the polynomials $Q_n(x)$ and $\tilde{Q})_n(x)$ is governed by the following result. 
\begin{proposition}
We have 
$$O=(1+bt,t)\cdot \tilde{O}.$$
\end{proposition}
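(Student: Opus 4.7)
The plan is to verify this identity by a direct application of the Riordan array multiplication rule, namely
$$(g_1(t),f_1(t))\cdot (g_2(t),f_2(t)) = \bigl(g_1(t)\,g_2(f_1(t)),\, f_2(f_1(t))\bigr).$$
Here the left factor $(1+bt,t)$ has $g_1(t)=1+bt$ and $f_1(t)=t$, while the right factor $\tilde O$ has
$$g_2(t)=\frac{1+bt}{1+(2b+c)t+b(b+c)t^2}, \qquad f_2(t)=\frac{t}{1+(2b+c)t+b(b+c)t^2}.$$

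First I would note that since $f_1(t)=t$, the composition step is trivial: $g_2(f_1(t))=g_2(t)$ and $f_2(f_1(t))=f_2(t)$. Thus the product reduces to
$$\bigl((1+bt)\cdot g_2(t),\, f_2(t)\bigr).$$

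Next I would compute the first component, obtaining
$$(1+bt)\cdot\frac{1+bt}{1+(2b+c)t+b(b+c)t^2} = \frac{(1+bt)^2}{1+(2b+c)t+b(b+c)t^2},$$
which is exactly the first component of $O$. The second component is already $f_2(t) = t/(1+(2b+c)t+b(b+c)t^2)$, which matches the second component of $O$. This completes the verification.

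There is no real obstacle here; the identity is essentially a restatement of the fact that $O$ and $\tilde O$ differ only by a factor of $1+bt$ in the first component, together with the triviality of composing with the identity function $f_1(t)=t$. The only small point worth flagging is that $(1+bt,t)$ is a legitimate Riordan array (the second component is a formal power series with zero constant term and nonzero coefficient of $t$), so that the multiplication rule applies without caveat.
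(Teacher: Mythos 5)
Your verification is correct: the paper states this proposition without proof, and your direct application of the Riordan multiplication rule $(g_1,f_1)\cdot(g_2,f_2)=(g_1\,g_2(f_1),f_2(f_1))$ is exactly the computation the author intends, matching the style of the proof given for the analogous factorization $L=\left(1,\frac{t}{1-bt}\right)\cdot O$ earlier in the same section. Nothing is missing.
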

We can find a relationship between the LBP matrix $\left(\frac{1}{1+ct}, \frac{t(1-bt)}{1+ct}\right)$ and the simpler orthogonal polynomial coefficient array $\left(\frac{1}{1+(2b+c)t+b(b+c)t^2}, \frac{t}{1+(2b+c)t+b(b+c)t^2}\right)$. This is the array of the family of orthogonal polynomials $\hat{Q}_n(x)$ that satisfy
$$\hat{Q}_n(x)=(x-(2b+c))\hat{Q}_{n-1}(x)-b(b+c)\hat{Q}_{n-2}(x)$$ with 
$\hat{Q}_0(x)=1, \hat{Q}_1(x)=x-(2b+c)$. 
\begin{proposition} 
We have 
$$L=\left(\frac{1}{(1-bt)^2}, \frac{t}{1-bt}\right)\cdot \left(\frac{1}{1+(2b+c)t+b(b+c)t^2}, \frac{t}{1+(2b+c)t+b(b+c)t^2}\right).$$
\end{proposition}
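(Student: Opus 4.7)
The plan is to verify the identity directly via the Riordan array multiplication rule, namely that
$$(g_1(t),f_1(t))\cdot (g_2(t),f_2(t))=\bigl(g_1(t)\,g_2(f_1(t)),\ f_2(f_1(t))\bigr).$$
Here I would set $g_1=\tfrac{1}{(1-bt)^2}$, $f_1=\tfrac{t}{1-bt}$, and denote by $D(t)=1+(2b+c)t+b(b+c)t^2$ the common denominator appearing in $g_2$ and $f_2$, so that $g_2=1/D$ and $f_2=t/D$. The whole proof then reduces to computing $D(f_1(t))$.

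The key algebraic step is to show that, after substituting $f_1=t/(1-bt)$ into $D$ and clearing the denominator $(1-bt)^2$, one obtains
$$(1-bt)^2 D\!\left(\tfrac{t}{1-bt}\right)=(1-bt)^2+(2b+c)t(1-bt)+b(b+c)t^2=1+ct.$$
I would expand term by term: the constant term is $1$, the $t$ coefficient is $-2b+(2b+c)=c$, and the $t^2$ coefficient is $b^2-(2b^2+bc)+(b^2+bc)=0$. This cancellation of the $t^2$ term is really the whole content of the proposition, and it is the only place where anything nontrivial happens; the rest is bookkeeping.

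With this identity in hand, the two components of the product follow immediately. For the first component,
$$g_1(t)\cdot g_2(f_1(t))=\frac{1}{(1-bt)^2}\cdot\frac{(1-bt)^2}{1+ct}=\frac{1}{1+ct},$$
and for the second,
$$f_2(f_1(t))=\frac{t/(1-bt)}{D(t/(1-bt))}=\frac{t(1-bt)}{1+ct},$$
which together yield $L=\bigl(\tfrac{1}{1+ct},\tfrac{t(1-bt)}{1+ct}\bigr)$, as required. The main (and essentially only) obstacle is ensuring the $t^2$ cancellation above; once that is observed the proof is a one-line substitution in each coordinate.
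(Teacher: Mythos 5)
Your proof is correct and uses the same direct-substitution method that the paper employs for the analogous factorizations such as $L=\left(1,\frac{t}{1-bt}\right)\cdot O$ (the paper in fact states this particular proposition without proof). The key cancellation $(1-bt)^2+(2b+c)t(1-bt)+b(b+c)t^2=1+ct$ is verified correctly, and the rest follows from the standard Riordan multiplication rule exactly as you describe.
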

We conclude from this that 
$$P_n(x)=\sum_{k=0}^n \binom{n+1}{k+1}b^{n-k}\hat{Q}_k(x).$$

\section{Conclusions}
The structure of Laurent biorthogonal polynomials defined by constant coefficients are fully defined by the properties of the generalized Delannoy matrix given by the Riordan array $\left(\frac{1}{1+ct}, \frac{t(1-bt)}{1+ct}\right)$. By means of Riordan array analysis, we can show them to be the binomial transform of a related family of orthogonal polynomials. These constant coefficient Laurent biorthogonal polynomials can be defined by $T$-fractions which are related to colored Schroeder paths. In particular, the moments of these LBP polynomials count Schroeder paths with colored horizontal and down steps (where the colors are the same for each level).

\bigskip
\hrule
\bigskip
\noindent 2010 {\it Mathematics Subject Classification}:
Primary 42C05; Secondary
11B83, 11C20, 15B05, 15B36, 33C45.

\noindent \emph{Keywords:} Laurent biorthogonal polynomials, orthogonal polynomials, moments, Toeplitz determinant, Hankel determinant, Riordan array.

\bigskip
\hrule
\bigskip
\noindent (Concerned with sequences
\seqnum{A000108},
\seqnum{A006318}, 
\seqnum{A060693}, 
\seqnum{A103210}, and
\seqnum{A155867}.)

\end{document}